\newtheorem{theorem}{Теорема}
\newtheorem{lemma}[theorem]{Лемма}
\newtheorem{definition}[theorem]{Определение}
\newtheorem{corollary}[theorem]{Следствие}
\newtheorem{statement}[theorem]{Утверждение}
\theoremstyle{definition}
\newtheorem{remark}[theorem]{Замечание}
\begin{document}


УДК 515.124

\begin{center}
{\Large Ball-preserving mappings of finite ultrametric spaces}

E. Petrov
\end{center}

It is shown that the rooted trees $T_X$ and $T_Y$ representing
finite ultrametric spaces $X$ and $Y$ are isomorphic if and only if
there exists a ball-preserving bijection $F:X\to Y$.

\textbf{Keywords:} finite ultrametric space, finite rooted tree,
ball-preserving mapping, ballean.

\textbf{2010 MSC:} 54E35.\\\\

\begin{center}
{\Large Сохраняющие шары отображения конечных  ультраметрических  пространств}

Е.А. Петров
\end{center}

\begin{abstract}
Показано, что корневые представляющие деревья $T_X$ и $T_Y$ конечных ультраметрических пространств $X$ и $Y$
изоморфны тогда и только тогда, когда существует сохраняющая шары биекция $F:X\to Y$.
\vspace{1mm}\\
\textbf{\emph {Ключевые слова: }}\itshape{конечное ультраметрическое пространство, конечное корневое дерево, сохраняющее шары отображение, боллеан.}
\end{abstract}

\section{Введение} В 2001 году на семинаре Workshop on General Algebra (см.~\cite{Lemin(2001)}) внимание специалистов по теории решёток было обращено на следующую задачу И.М. Гельфанда. \emph{Используя теорию графов, описать с точностью до изометрии все конечные ультраметрические пространства}. В~\cite{Lemin(2003)}  была доказана теорема про изоморфизм категории ультраметрических пространств и категории полных, атомных, древовидных, градуированных действительными числами решеток. В работе~\cite{GurVyal(2012)} взвешенный орграф определял квазиультраметрическое конечное пространство, а в симметричном случае авторы получали ``каноническое представление'' конечных ультраметрических пространств с использованием взвешенных корневых деревьев, причём эти деревья были изоморфны как корневые взвешенные графы тогда и только тогда, когда соответствующие им ультраметрические пространства были изометричными.

Каноническое представление из~\cite{GurVyal(2012)} можно, в определённом смысле, считать решением упомянутой выше задачи И.М. Гельфанда. Естественно возникает вопрос о применении полученного представления в исследовании ультраметрических пространств. В связи с этим заметим, что в последнее время началось изучение боллеанов (ballean) метрических и более общих пространств (см., например,~\cite{ProtBanakh(2003)}, ~\cite{ProtZarich(2007)}). В метрическом случае боллеан~--- это просто совокупность шаров пространства и исследование боллеанов очевидным образом связано с изучением класса отображений, сохраняющих свойство ``быть шаром''.

В работе ~\cite{PD} изучался класс конечных ультраметрических пространств являющихся ``экстремалями'' фундаментального неравенства Гомори-Ху. Для  пространств $(X,d)$, $(Y,\rho)$ из этого класса было доказано, что их представляющие деревья $T_X$ и $T_Y$ изоморфны тогда и только тогда, когда существует сохраняющее шары биективное отображение $F:X\to Y$. Таким образом, вопрос об изоморфизме боллеанов пространств $X$ и $Y$ оказался эквивалентным вопросу об изоморфизме корневых деревьев, получаемых путём ``забывания'' заданных на них весовых функций. В настоящей работе мы обобщаем этот результат на случай произвольных конечных ультраметрических пространств.

Напомним необходимые определения. Для конечного множества $X$ через $|X|$ будем обозначать количество его элементов. Пусть $(X,d)$~--- метрическое пространство. Если метрика $d$ удовлетворяет сильному неравенству треугольника
$$
d(x,y)\leq \max \{d(x,z),d(z,y)\}
$$
для всех $x,y,z \in X$, то она называется \emph{ультраметрикой},  а пара $(X,d)$ называется \emph{ультраметрическим пространством}. В дальнейшем будем рассматривать только те пространства $(X,d)$, для которых $|X|\neq 0$. \emph{Диаметром} метрического пространства $(X,d)$ называется  величина
$$
\mathrm{diam} X:=\sup\{d(x,y):x,y \in X\}.
$$

Под \textit{графом} мы  понимаем пару $(V,E)$, состоящую из непустого множества $V$ и (возможно пустого) множества $E$, элементы которого есть неупорядоченные пары различных точек из $V$. Для графа $G=(V,E)$ множество $V=V(G)$ называется \textit{множеством вершин}, а $E=E(G)$ ~--- \textit{множеством рёбер}. Если элементами множества $E$ являются упорядоченные пары $\langle x,y\rangle\in V\times V$, то $G=(V,E)$~--- ориентированный граф (\emph{орграф}).

Граф $H$ является {\it подграфом} графа $G$, $H\subseteq G$, если $V(H)\subseteq V(G)$ и $E(H)\subseteq E(G)$. Граф $G$ {\it конечен}, если $|V(G)|<\infty$. Если $E(G)=\varnothing$, то $G$~--- \textit{пустой граф}. Конечный непустой граф $P\subseteq G$ называется \textit{путём} (в $G$), если вершины из $P$ можно без повторений занумеровать в последовательность $(v_1,v_2,...,v_n)$ так, что $ (\{v_i,v_j\}\in E(P))\Leftrightarrow (|i-j|=1)$. Две вершины в графе \textit{связаны}, если существует соединяющий их путь. {\it Связный граф} — граф, в котором все вершины связаны.

\textit{Деревом} называется связный граф, не имеющий циклов.  Выбранная вершина дерева называется \emph{корнем дерева}. Дерево, содержащее такую вершину, называется \emph{корневым деревом}. Вершину дерева иногда называют \emph{узлом.} \emph{Уровень узла} — длина пути от корня до узла, $m$-й \emph{ярус} дерева~--- множество узлов дерева, на уровне $m$ от корня дерева. \emph{Потомками} данного узла будем называть все узлы последующего яруса, смежные с данным узлом. \emph{Лист} дерева~--- вершина дерева инцидентная с единственным ребром. Если $T$~--- корневое дерево с единственным узлом, то мы считаем, что этот узел является листом.  \emph{Внутренний узел}~--- узел дерева, не являющийся листом.  Два корневых дерева $T_1$ и $T_2$ называются изоморфными, если существует биекция $F:V(T_1)\to V(T_2)$, переводящая корень дерева $T_1$  в корень дерева $T_2$ и такая, что
$$
(\{u,v\}\in E(T_1))\Leftrightarrow (\{F(u),F(v)\}\in E(T_2))
$$
для любых различных $u,v \in V(T_1)$.

Граф $G$ называется \emph{полным $k$-дольным}, если его вершины можно разбить на $k$ непустых непересекающихся подмножества $X_1,...,X_k$ так, что нет рёбер, соединяющих вершины одного и того же $X_i$, и две любые вершины из разных $X_i, X_j$, $1\leqslant i, j\leqslant k$ смежны. В этом случае пишем $G=G[X_1,...,X_k]$.

Пусть $(Y,\leqslant_Y)$~--- конечное частично упорядоченное множество. Под \emph{диаграммой Хассе} ч.у. множества $(Y,\leqslant_Y)$ мы понимаем орграф с множеством вершин $Y$ и множеством дуг  (ориентированных рёбер) $A_Y\subseteq Y\times Y$ таких, что  пара $\langle v_1,v_2 \rangle$ принадлежит $A_Y$ тогда и только тогда, когда $v_1\leqslant_Y v_2$, $v_1\neq v_2$ и импликация
$$
(v_1\leqslant_Y w\leqslant_Y v_2)\Rightarrow (v_1=w \vee v_2=w)
$$
имеет место для любого $w\in Y$. Два орграфа $(Y,A_Y)$ и $(X,A_X)$ являются изоморфными, если существует биекция $F:X\to Y$ такая, что
\begin{equation}\label{d1}
  (\langle x,y\rangle\in A_X)\Leftrightarrow(\langle F(x),F(y)\rangle \in A_Y),
\end{equation}
в этом случае $F$~--- изоморфизм орграфов $(Y,A_Y)$ и $(X,A_X)$.
Любое корневое дерево $T$ можно рассматривать как орграф $(V(T),A_T)$, если положить
\begin{equation}\label{d2}
  (\langle u,v \rangle \in A_T) \Leftrightarrow  (u \text{~--- потомок } v).
\end{equation}
Следующие утверждения почти очевидны.

\begin{statement}\label{prop1}
  Пусть $(X,\leqslant_X)$, $(Y,\leqslant_Y)$~--- конечные ч.у. множества, $(X,A_X)$, $(Y,A_Y)$~--- соответствующие им диаграммы Хассе, $F:X\to Y$~--- биекция. Отображение $F$ является изоморфизмом ч.у. множеств $(X,\leqslant_X)$ и $(Y,\leqslant_Y)$ тогда  и только тогда, когда оно является изоморфизмом орграфов $(X,A_X)$ и $(Y,A_Y)$.
\end{statement}

В этом утверждении и далее изоморфизм  ч.у. множеств понимаем в стандартном смысле (см., например,~\cite[стр. 44]{MR(1990)}).

\begin{statement}\label{prop2}
Пусть $T_1$ и $T_2$~--- конечные корневые деревья, $X=V(T_1)$, $Y=V(T_2)$ и $(X,A_X)$, $(Y,A_Y)$~--- орграфы, соответствующие $T_1$ и $T_2$, $F:X\to Y$~--- биекция. Отображение $F$ является изоморфизмом корневых деревьев $T_1$ и $T_2$ тогда и только тогда, когда оно является изоморфизмом орграфов $(X,A_X)$ и $(Y,A_Y)$.
\end{statement}

\section{Представляющие деревья} Пусть $(X,d)$~--- ультраметрическое пространство. \emph{Диаметральным графом}  $G_{d}$ будем называть граф, для которого
$$
V(G_{d})=X \, \text{ и } \, (\{u,v\}\in E(G_{d}))\Leftrightarrow (d(u,v)=\mathrm{diam}X).
$$

Напомним, что в метрическом пространстве $(X,d)$ замкнутым шаром радиуса $r$ с центром в точке $t\in X$ называется множество
$$
B_r(t)=\{x\in X:d(x,t)\leqslant r\}.
$$
Для каждого $t\in X$ положим $\mathrm{Sp}_t(X):=\{d(x,t):x\in X\}$. Обозначим через $\textbf{B}_X$ множество всех шаров $B_r(t)$ с $r\in \mathrm{Sp}_t(X)$, т.е.
$$
\textbf{B}_X=\{B_r(t):t\in X, r\in \mathrm{Sp}_t(X)\}.
$$

Нам понадобится следующая теорема из~\cite{DDP(P-adic)}

\begin{theorem}\label{th1}
Пусть $(X,d)$~--- конечное ультраметрическое про\-стран\-ство с $|X|\geqslant 2$.  Тогда $G_d=G_d[X_1,...,X_k]$, $k\geqslant 2$.
 \end{theorem}

Для каждого $i=1,...,k$ рассмотрим ультраметрические пространства $(X_i,d)$, где $X_i$~--- подмножество множества $X$ из предыдущей теоремы с метриками, полученными сужением ультраметрики $d$ на $X_i$. Пусть $d_i=\mathrm{diam}X_i$ и $x_i\in X_i$. Очевидно, $d(x_i,y)\leqslant d_i < \mathrm{diam}X$ для всех $y\in X_i$, а при $y\in X\backslash X_i$ имеем $d(x_i,y)=\mathrm{diam}X > d_i$. Таким образом, выполнена следующая

\begin{lemma}\label{lem2}
Пусть $(X,d)$~--- конечное ультраметрическое про\-стран\-ство с $|X|\geqslant 2$ и диаметральным графом $G_d[X_1,...,X_k]$. Тогда имеет место принадлежность $X_i\in \textbf{B}_X$, $1\leqslant i\leqslant k$.
 \end{lemma}

Как это было сделано в~\cite{PD}, поставим каждому конечному ультраметрическому пространству $(X,d)$ в соответствие помеченное корневое дерево $T_X$ по следующему правилу. Если $X=\{x\}$~--- одноточечное множество, то $T_X$~--- дерево, состоящее из одного узла, помеченного меткой $x$, которое мы считаем корневым по определению. Пусть $|X|\geqslant 2$. Корень $v_0$ дерева пометим меткой $\bar{v}_0=\mathrm{diam} X$. Пусть $G_d$~--- диаметральный граф пространства $(X,d)$. По теореме~\ref{th1} $G_d=G_d[X_1,...,X_k]$.  В этом случае будем считать, что дерево $T_X$ имеет $k$ узлов $v_1,v_2,...,v_k$, лежащих на первом ярусе с метками
\begin{equation}\label{eq1.6}
\bar{v}_i:=
\begin{cases}
\mathrm{diam} X_i, &\text{если } |X_i|\geqslant 2\\
x,  &\text{если } X_i \text{~--- одноточечное множество} \\
&\text{с единственным элементом } x,
\end{cases}
\end{equation}
$i=1,...,k$. Узлы первого яруса, помеченные метками $x\in X$, будут листьями, а метками $\mathrm{diam} X_i$~--- внутренними узлами дерева $T_X$. Если на первом ярусе внутренних узлов нет, то дерево $T_X$ построено. В противном случае, повторяя описанную выше процедуру с пространствами $(X_i,d)$, соответствующими внутренним узлам первого яруса, получаем узлы второго яруса и т.д.. Так как $|X|$ конечно, то на каком-то из ярусов все вершины будут листьями и построение дерева $T_X$ завершается.

Построенное выше помеченное корневое дерево $T_X$ будем называть \emph{представляющим деревом пространства} $(X,d)$. Отметим, что разным листьям соответствуют разные $x\in X$ и каждый элемент $x\in X$ приписан какому-то листу, но различные внутренние узлы могут иметь совпадающие метки.
В дальнейшем изложении мы будем отождествлять листья дерева $T_X$ с их метками, если это удобно.

\begin{remark}\label{rem1}
Пусть $|X|\geqslant 2$ и $(v_0, v_1,...,v_n,x_i)$~--- путь от корня $v_0$ дерева $T_X$ до произвольного листа $x_i$, тогда $\mathrm{diam}X=\bar{v}_0>\bar{v}_1>...>\bar{v}_n$.
\end{remark}

Следующая лемма была сформулирована в~\cite{PD} для специального класса конечных ультраметрических пространств, но её доказательство справедливо для всех таких пространств.
\begin{lemma}\label{lem14}
Пусть $(X,d)$~--- конечное ультраметрическое пространство и пусть $x_1$, $x_2$~--- два различных листа дерева $T_X$. Тогда, если $(x_1,v_1,...,v_n,x_2)$~--- путь, соединяющий листья $x_1$ и $x_2$ в $T_X$, то
\begin{equation}\label{eq10}
d(x_1,x_2)=\max_{1\leqslant i\leqslant n} \bar{v}_i.
\end{equation}
\end{lemma}

\begin{lemma}\label{lem4'}
Пусть $(X,d)$~--- ультраметрическое пространство и $Y,Z \subseteq X$. Тогда,  если $Y\in \textbf{B}_X$ и $Z \in \textbf{B}_Y$, то $Z\in \textbf{B}_X$.
\end{lemma}

\begin{proof}
Выберем $y_1\in Y$, $z_1\in Z$ и $r_y, r_z\in [0,\infty)$ так, что $Y=\{x\in X:d(y_1,x)\leqslant r_y\}$ и $Z=\{y\in Y:d(z_1,y)\leqslant r_z\}$. Так как в ультраметрическом пространстве любая точка шара является его центром, то
\begin{equation}\label{a0}
Y=\{x\in X:d(z_1,x)\leqslant r_y\}.
\end{equation}
Диаметры шаров из $\textbf{B}_X$ и $\textbf{B}_Y$ совпадают с их радиусами, значит
\begin{equation}\label{a1}
r_y=\mathrm{diam} Y, \quad r_z=\mathrm{diam} Z,
\end{equation}
а так как  $Z\subseteq Y$, то из~(\ref{a1}) следует
\begin{equation}\label{a2}
r_y\geqslant r_z.
\end{equation}
Принадлежность $Z\in \textbf{B}_X$ равносильна тому, что
\begin{equation}\label{a3}
Z=\{x\in X:d(x,z_1)\leqslant r_z\}.
\end{equation}
Включение $Z\subseteq\{x\in X:d(x,z_1)\leqslant r_z\}$ очевидно, поэтому достаточно доказать обратное включение. Пусть $x_0\in X$ и $d(z_1,x_0)\leqslant r_z$. Тогда в силу~(\ref{a2}) и~(\ref{a0}) имеем $x_0\in Y$. Отсюда и неравенства $d(z_1,x_0)\leqslant r_z$ следует, что $x_0\in Z$. Равенство~(\ref{a3}) доказано.
\end{proof}

\begin{definition}\label{def6*}
Пусть $T$~--- конечное корневое дерево с корнем $v_0$. Для каждой вершины $v\in V(T)$ определим подграф $T^v$ следующим образом. Если $v=v_0$, то $T^v:=T$, Если $v\neq v_0$, то пусть $u$~--- единственная вершина $T$ такая, что $v$~--- потомок $u$. Рассмотрим $G\subseteq T$ с
$$
V(G):=V(T)\, \text{ и } \, E(G):=E(T)\backslash \{u,v\}.
$$
Граф $G$ представляет собой лес, состоящий из двух деревьев. $T^v$~--- то из этих деревьев, которое содержит $v$.
\end{definition}

Пусть как в определении~\ref{def6*} $T$~--- конечное корневое дерево с корнем $v_0$. Обозначим через $L_v$ множество листьев графа $T^v$. Легко показать, что $L_v\subseteq L$, где $L$~--- множество листьев графа $T$. Рассмотрим многозначное отображение
\begin{equation}\label{b1}
\Gamma_T:V(T)\to 2^{L}
\end{equation}
такое, что $\Gamma_T(v)=L_v$ для $v\in V(T)$. Заметим, что если $v$~--- лист дерева $T$, то $\Gamma_T(v)=\{v\}$, где $\{v\}$~--- одноточечное множество, состоящее из единственного элемента $v$.

Исследуем отображение $\Gamma_T$ в случае когда $T$ является представляющим деревом конечного ультраметрического пространства $X$.

\begin{lemma}\label{lem7}
Пусть $(X,d)$~--- конечное ультраметрическое пространство с представляющим деревом $T_X$, $|X|\geqslant 2$. Тогда
\begin{itemize}
  \item [(i)] отображение $\Gamma_{T_X}:V(T_X)\to 2^{X}$ является инъективным,
  \item [(ii)] для любого $v\in V(T_X)$ имеет место принадлежность $\Gamma_{T_X}(v)\in \textbf{B}_X$,
  \item [(iii)] для любого $\tilde{B} \in \textbf{B}_X$ существует узел $\tilde{v}$ такой, что $\Gamma_{T_X}(\tilde{v})=\tilde{B}$.
\end{itemize}
\end{lemma}
\begin{proof}
Инъективность $\Gamma_{T_X}$ и принадлежность $\Gamma_{T_X}(v) \in \textbf{B}_X$ следует из леммы~\ref{lem4'}, леммы~\ref{lem2}  и приведённого выше построения представляющего дерева $T_X$. Покажем, что для любого $\tilde{B}=\{x_1,..,x_k\}\in \textbf{B}_X$ найдётся узел $\tilde{v}\in V(T_X)$ такой, что $\Gamma_{T_X}(\tilde{v})=\tilde{B}$. При $k=1$ это очевидно, поэтому считаем $k\geqslant 2$. Положим $b:=\max\{d(x,y), x,y\in \tilde{B}\}$ и пусть $b=d(x_i,x_j)$. Т.к. $\tilde{B}$~--- это шар в ультраметрическом пространстве, то он совпадает с множеством $\{x \in X: d(x,x_i)\leqslant b\}$. Пусть $(x_i,v_1,...,v_n,x_j)$~--- путь, соединяющий листья $x_i$ и $x_j$ в $T_X$. По лемме~\ref{lem14} $b=d(x_i,x_j)=\max\limits_{1\leqslant i \leqslant n}\bar{v}_i$. В силу замечания~\ref{rem1}, узел $\tilde{v}$, помеченный меткой $b$, будет узлом наименьшего яруса среди узлов $v_i$. Рассмотрим корневое поддерево $T^{\tilde{v}}_X$ дерева $T_X$. Покажем, что множество листьев $L_{\tilde{v}}$ поддерева $T^{\tilde{v}}_X$ совпадает с множеством $\tilde{B}$. Пусть $x\in L_{\tilde{v}}$. Рассмотрим путь $(x,v_1,...,v_n,x_i)$. Очевидно, что $d(x,x_i)=\max\limits_{1\leqslant i\leqslant n}\bar{v}_i\leqslant b$. Следовательно, $x\in \tilde{B}$. Обратно, пусть $x\in \tilde{B}$ и предположим, что $x\notin L_{\tilde{v}}$. Рассмотрим путь $(x,v_1,..,v_n,x_i)$ в $T_X$, соединяющий $x$ и $x_i$. Тогда вершина $\tilde{v}$ является потомком одной из вершин $v_i$ этого пути, что даёт неравенство $b<\bar{v}_i$. Следовательно, $d(x,x_i)=\max\limits_{1\leqslant i\leqslant n}\bar{v}_i>b$, что противоречит принадлежности $x\in \tilde{B}$.
\end{proof}

Корневое дерево с корнем $v_0$ (см.~(\ref{eq1.6})), получающееся из $T_X$ путём ``стирания меток''  будем обозначать через $\overline{T}_X$.

\begin{lemma}\label{lem8}
Пусть $(X,d)$~--- конечное ультраметрическое пространство,   $u, v \in V(\overline{T}_X)$, $u\neq v$. Узел $u$ является потомком узла $v$ тогда и только тогда, когда
\begin{equation}\label{c1}
\Gamma_{\overline{T}_X}(u)\subseteq \Gamma_{\overline{T}_X}(v)
\end{equation}
и импликация
\begin{equation}\label{c2}
(\Gamma_{\overline{T}_X}(u)\subseteq \Gamma_{\overline{T}_X}(w)\subseteq \Gamma_{\overline{T}_X}(v))\Rightarrow (\Gamma_{\overline{T}_X}(u)=\Gamma_{\overline{T}_X}(w))\vee(\Gamma_{\overline{T}_X}(w)=\Gamma_{\overline{T}_X}(v))
\end{equation}
выполняется для любого $w\in V(\overline{T}_X)$.
\end{lemma}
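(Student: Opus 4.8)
The plan is to reduce the statement to a single order-theoretic fact about the rooted tree $\overline{T}_X$; throughout I write $\Gamma$ for $\Gamma_{\overline{T}_X}$ and say $p\preceq q$ if $q$ lies on the path from the root $v_0$ to $p$ (that is, $q$ is an ancestor of $p$, or $q=p$). The only structural input I would use is that $\overline{T}_X$ has no vertex with a single child, so that every internal vertex branches into at least two subtrees; this is how $T_X$ is built from Theorem~\ref{th1} (each split $G_d[X_1,\dots,X_k]$ has $k\geqslant 2$), and it is precisely what underlies the injectivity of $\Gamma_{T_X}$ in Lemma~\ref{lem7}. Granting it, the heart of the matter is the equivalence
$$
\Gamma(p)\subseteq \Gamma(q)\iff p\preceq q ,\qquad p,q\in V(\overline{T}_X).
$$

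First I would prove this equivalence. The direction $p\preceq q\Rightarrow\Gamma(p)\subseteq\Gamma(q)$ is immediate from Definition~\ref{def6*}: then $T^{p}$ is a subtree of $T^{q}$, so $L_p\subseteq L_q$. For the converse I argue contrapositively. If $q$ is neither equal to nor an ancestor of $p$, there are two cases. If $p$ and $q$ are incomparable, the subtrees $T^{p}$ and $T^{q}$ are disjoint, hence $L_p\cap L_q=\varnothing$; since $L_p\neq\varnothing$ this gives $\Gamma(p)\not\subseteq\Gamma(q)$. If instead $q$ is a strict descendant of $p$, let $c$ be the child of $p$ through which the path to $q$ passes; by the branching property $p$ has another child $c'$, and any leaf of $T^{c'}$ lies in $L_p\setminus L_q$, so again $\Gamma(p)\not\subseteq\Gamma(q)$. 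Applying the equivalence in both directions also yields $\Gamma(p)=\Gamma(q)\iff p=q$, i.e. the injectivity of $\Gamma$.

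With the equivalence in hand the lemma is formal. If $u$ is a child of $v$, then $v$ is an ancestor of $u$, so $u\preceq v$ gives~(\ref{c1}); and if $\Gamma(u)\subseteq\Gamma(w)\subseteq\Gamma(v)$ then $u\preceq w\preceq v$, and since $v$ is the immediate ancestor of $u$ the only possibilities are $w=u$ or $w=v$, which gives~(\ref{c2}). Conversely, assume~(\ref{c1}) and~(\ref{c2}). By the equivalence $v$ is an ancestor of $u$; moreover $u$ is not the root, for otherwise $\Gamma(u)=X\subseteq\Gamma(v)$ would give $\Gamma(v)=X=\Gamma(u)$ and hence $u=v$. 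Thus $u$ has a parent $v'$, and $u\preceq v'\preceq v$ gives $\Gamma(u)\subseteq\Gamma(v')\subseteq\Gamma(v)$; now~(\ref{c2}) with injectivity forces $v'=u$ or $v'=v$, and since $v'\neq u$ we conclude $v'=v$, i.e. $u$ is a child of $v$.

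I expect the only real obstacle to be verifying that the branching property (no vertex of out-degree one) genuinely survives the ``gluing of leaves'' that turns $T_X$ into $\overline{T}_X$, since the entire reduction rests on it; once this is secured the argument is purely combinatorial, and the ultrametric content of the problem enters only through Lemma~\ref{lem7}.
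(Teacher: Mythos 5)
Your proof is correct, but there is nothing in the paper to compare it against: the author explicitly omits the proof of Lemma~\ref{lem8}, remarking only that it is simple enough ("�������������� ���� ����� ���������� ������ � �� ��� ��������"). Your route --- first establishing the order-theoretic equivalence $\Gamma(p)\subseteq\Gamma(q)\Leftrightarrow p\preceq q$ for the ancestor order and then reading off the covering relation --- is a complete and natural way to fill that gap, and each step checks out: disjointness of the subtrees $T^p$, $T^q$ for incomparable vertices, the second child of $p$ supplying a leaf of $L_p\setminus L_q$ when $q$ is a strict descendant, and the chain-of-ancestors argument (insert the parent $v'$ of $u$ between $u$ and $v$, then apply~(\ref{c2}) and injectivity) in the converse. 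The one caveat you flag for yourself dissolves on inspection of the construction: $\overline{T}_X$ is obtained from $T_X$ merely by forgetting the labels, so its vertex and edge sets coincide with those of $T_X$ --- the paper itself treats $V(\overline{T}_X)$, $L_v$ and $\Gamma$ interchangeably with their $T_X$-counterparts, for instance when it invokes Lemma~\ref{lem7} for $\overline{T}_X$ inside the proof of Theorem~\ref{th13*}. Hence Theorem~\ref{th1} with $k\geqslant 2$ hands you the branching property directly: every internal vertex has at least two children, and the leaves are in bijection with the points of $X$. Two minor economies: the injectivity of $\Gamma$ that you re-derive along the way is already available as Lemma~\ref{lem7}(i) and could simply be cited, and the degenerate case $|X|=1$ is vacuous, since then $V(\overline{T}_X)$ contains no two distinct vertices at all.
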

Доказательство этой леммы достаточно просто и мы его опускаем.

Из лемм~\ref{lem7} и~\ref{lem8} выводится
\begin{corollary}\label{cor8*}
Пусть $(X,d)$~--- конечное ультраметрическое пространство. Если на $\textbf{B}_X$ задать частичный порядок, индуцированный из частично упорядоченного множества $(2^X,\subseteq)$, то диаграмма Хассе ч.у. множества $(\textbf{B}_X, \subseteq)$ изоморфна орграфу  $(V(\overline{T}_X),A_{\overline{T}_X})$.
\end{corollary}

\section{Отображение, сохраняющее шары} Сформулируем  центральное для этой работы

\begin{definition}\label{def12*}
Пусть $X$ и $Y$~--- метрические пространства. Отображение $F:X\to Y$ сохраняет шары, если для любых $Z\in \textbf{B}_X$ и $W\in \textbf{B}_Y$ выполнены соотношения
\begin{equation}\label{eq9}
F(Z)\in \textbf{B}_Y \,\text{ и } \, F^{-1}(W)\in \textbf{B}_X,
\end{equation}
где $F(Z)$~--- образ множества $Z$ при отображении $F$ и $F^{-1}(W)$~---  прообраз множества $W$ при этом отображении.
\end{definition}

Отметим, что для любого биективного отображения $F:X\to Y$ и любых подмножеств $X_1$, $X_2$ множества $X$ включение $X_1\subseteq X_2$ имеет место тогда и только тогда, когда $F(X_1)\subseteq F(X_2)$. Следовательно справедлива следующая

\begin{lemma}\label{lem9*}
Пусть $X,Y$~--- метрические пространства и $F:X\to Y$~--- сохраняющее шары биективное отображение. Тогда для любых $B_1, B_2\in \textbf{B}_X$ имеет место эквивалентность
$$
(B_1\subseteq B_2)\Leftrightarrow (F(B_1)\subseteq F(B_2)).
$$
\end{lemma}

\begin{corollary}\label{cor10*}
Пусть $X,Y$~--- метрические пространства и $F:X\to Y$~--- сохраняющее шары биективное отображение. Тогда отображение
$$
(\textbf{B}_X,\subseteq)\ni B\mapsto F(B)\in(\textbf{B}_Y,\subseteq)
$$
есть изоморфизм ч.у. множеств $(\textbf{B}_X,\subseteq)$ и $(\textbf{B}_Y,\subseteq)$.
\end{corollary}

\begin{theorem}\label{th13*}
Пусть $X$, $Y$~--- конечные ультраметрические пространства. $\overline{T}_X$ и $\overline{T}_Y$ изоморфны как корневые деревья тогда и только тогда, когда существует сохраняющее шары биективное отображение $\Phi:X\to Y$.
\end{theorem}

\begin{proof} Теорема тривиальна при $|X|=1$, поэтому будем считать, что $|X|\geqslant 2$. Пусть $V_X = V(\overline{T}_X)$ и $V_Y = V(\overline{T}_Y)$~--- множества вершин графов $\overline{T}_X$ и $\overline{T}_Y$ соответственно. Предположим, что существует биективное отображение $\Psi:V_X\to V_Y$, сохраняющее отношение смежности между вершинами и переводящее корень дерева $\overline{T}_X$ в корень дерева $\overline{T}_Y$.
Биекция $\Psi$ отображает множество листьев графа $\overline{T}_X$~--- множество $X$ на множество листьев графа $\overline{T}_Y$~--- множество $Y$, так как листья ~--- это в точности вершины степени 1. Обозначим через $\Phi$ сужение $\Psi$  на $X$, $\Phi=\Psi|_{X}$, и покажем, что биективное отображение $\Phi:X\to Y$, рассматриваемое как отображение между ультраметрическими пространствами $(X,d)$ и $(Y,\rho)$, сохраняет шары.

Пусть $B\in \textbf{B}_X$. Покажем что
\begin{equation}\label{b2}
\Phi(B)\in \textbf{B}_Y
\end{equation}
В силу леммы~\ref{lem7} существует узел $v$ дерева $\overline{T}_X$, для которого $B$ совпадает с множеством листьев $L_v$ графа $\overline{T}_X^v$, где $\overline{T}_X^v$~--- поддерево дерева $\overline{T}_X$ задаваемое определением~\ref{def6*}. Так как $\Psi$~--- изоморфизм, то образом дерева $\overline{T}_X^v$ является какое-то поддерево $T'$ дерева $\overline{T}_Y$. Пусть $u=\Psi(v)$ и $\overline{T}_Y^u$~--- поддерево дерева $\overline{T}_Y$, построенное для $u$ в соответствии с определением~\ref{def6*}. Так как при изоморфизме $\Psi$ корень дерева $\overline{T}_X$ переходит в корень дерева $\overline{T}_Y$, то используя определение~\ref{def6*}, легко установить равенство $T'=\overline{T}_X^u$. Сужение $\Psi$ на $V(\overline{T}_X^v)$ является изоморфизмом деревьев $\overline{T}_X^v$ и $\overline{T}_Y^u$. При изоморфизме множество листьев переходит в множество листьев. Пусть $L_u$~--- множество листьев дерева $\overline{T}_Y^u$. Тогда
\begin{equation}\label{b3}
L_u=\Psi|_{V(\overline{T}_X^v)}(L_v)=\Psi(L_v).
\end{equation}
Так как $L_v\subseteq X$, а $\Phi=\Psi|_X$, то из ~(\ref{b3}) получаем
\begin{equation}\label{b4}
L_u=\Phi(L_v)=\Psi(B).
\end{equation}
По лемме~\ref{lem7} имеем $L_u\in \textbf{B}_Y$. Отсюда и из~(\ref{b4}) следует~(\ref{b2}). Аналогично устанавливается, что
$$
\Phi^{-1}(Z)\in \textbf{B}_X
$$
для любого $Z\in \textbf{B}_Y$. Таким образом, из того, что $\overline{T}_X$ и $\overline{T}_Y$ изоморфны как корневые деревья следует, что $\Phi$~--- сохраняющая шары биекция.

Пусть теперь \mbox{$\Phi:X\to Y$} сохраняющая шары биекция. Нужно доказать, что $\overline{T}_X$ и $\overline{T}_Y$ изоморфны как корневые деревья.
Пусть $(Y,A_Y)$, $(X,A_X)$~--- диаграммы Хассе ч.у. множеств $(\textbf{B}_Y,\subseteq)$, $(\textbf{B}_X,\subseteq)$ и пусть $(V(\overline{T}_Y), A_{\overline{T}_Y})$, $(V(\overline{T}_X), A_{\overline{T}_X})$ орграфы, соответствующие  $\overline{T}_Y$, $\overline{T}_X$. В соответствии со следствием~\ref{cor8*}, $(X,A_X)$ и $(V(\overline{T}_X), A_{\overline{T}_X})$~--- изоморфны как ориентированные графы, аналогично, орграфы $(Y,A_Y)$ и $(V(\overline{T}_Y), A_{\overline{T}_Y})$ тоже изоморфны. Используя утверждение ~\ref{prop1} и следствие~\ref{cor10*}, убеждаемся в изоморфизме орграфов $(Y,A_Y)$ и $(X,A_X)$. Следовательно орграфы $(V(\overline{T}_X), A_{\overline{T}_X})$ и $(V(\overline{T}_Y), A_{\overline{T}_Y})$ тоже изоморфны. Последнее по утверждению~\ref{prop2} равносильно изоморфности корневых деревьев $\overline{T}_X$ и $\overline{T}_Y$.
\end{proof}

Рассуждения, аналогичные проведённым во второй части доказательства теоремы~\ref{th13*},  показывают, что $\overline{T}_X$ и $\overline{T}_Y$ изоморфны как корневые деревья тогда и только тогда, когда изоморфны ч.у. множества $(\textbf{B}_X,\subseteq)$ и $(\textbf{B}_Y,\subseteq)$. Таким образом, имеет место
\begin{corollary}\label{cor17}
Пусть $X$ и $Y$~--- конечные ультраметрические пространства. Ч.у. множества $(\textbf{B}_X,\subseteq)$ и $(\textbf{B}_Y,\subseteq)$ изоморфны тогда и только тогда, когда существует сохраняющая шары биекция $\Phi:X\to Y$.
\end{corollary}

\end{document}